\begin{document}
\title[singular solutions for a 2x2 nonconservative system]
{Singular solutions for a 2x2 system in nonconservative form with incomplete set of eigenvectors}

\author[Anupam Pal Choudhury]
{Anupam Pal Choudhury}

\address{Anupam Pal Choudhury \newline
TIFR Centre for Applicable Mathematics\\
Sharada Nagar, Chikkabommasandra, GKVK P.O.\\
Bangalore 560065, India}
\email{anupam@math.tifrbng.res.in}

\subjclass[2010]{35L65, 35L67}
\keywords{Hyperbolic systems of conservation laws, $\delta-$shock wave type solution, the weak asymptotics method.}

\begin{abstract}
In this paper, we study the initial-value problem for two first order systems in non-conservative form.
The first system arises in elastodynamics and belongs to the class of strictly hyperbolic, genuinely nonlinear systems. The second system
has repeated eigenvalues and an incomplete set of right eigenvectors. Solutions to such systems are expected to develop singular concentrations.
Existence of singular solutions to both the systems have been shown using the method of weak asymptotics. The second system has been shown to develop
singular concentrations even from Riemann-type initial data. The first system differing from the second
in having an extra term containing a positive constant k, the solution constructed for the first system have been shown to converge to the solution
of the second as k tends to 0. 
\end{abstract}

\maketitle
\numberwithin{equation}{section}
\numberwithin{equation}{section}
\newtheorem{theorem}{Theorem}[section]
\newtheorem{remark}[theorem]{Remark}

\section{Introduction}
The initial-value problem for the first-order quasilinear hyperbolic system 
\begin{equation}
\begin{aligned} 
 &\frac{\partial u}{\partial t}+u\frac{\partial u}{\partial x}-\frac{\partial \sigma}{\partial x}=0, \\
 &\frac{\partial \sigma}{\partial t}+u\frac{\partial \sigma}{\partial x}-k^{2}\frac{\partial u}{\partial x}=0
 \end{aligned}
\label{e1.1}
\end{equation}
 (in the domain $\Omega=\{(x,t):-\infty<x<\infty,t>0\}$) arising in applications in elastodynamics, has been well studied (see \cite{c1},\cite{j1},\cite{j2},\cite{l1}). Here $k$ is a positive constant. It is a strictly hyperbolic system having two real distinct eigenvalues given by
$$\lambda_{1}(u,\sigma)=u-k,\ \lambda_{2}(u,\sigma)=u+k$$
with the corresponding right eigenvectors
$$E_{1}(u,\sigma)=\left( \begin{array}{c}
1 \\
k \end{array} \right),\ 
E_{2}(u,\sigma)=\left(\begin{array}{c}
                       1 \\
                       -k \end{array}\right).$$
Now letting $k \rightarrow 0$, we see that the eigenvalues $\lambda_{1}(u,\sigma)$ and $\lambda_{2}(u,\sigma)$ tend to coincide. In particular, taking $k=0$ in \eqref{e1.1} we arrive at the system 
\begin{equation}
\begin{aligned}
 &\frac{\partial u}{\partial t}+u\frac{\partial u}{\partial x}-\frac{\partial \sigma}{\partial x}=0, \\
 &\frac{\partial \sigma}{\partial t}+u\frac{\partial \sigma}{\partial x}=0.
\end{aligned}
\label{e1.2}
\end{equation}
which has repeated eigenvalues $\lambda_{1}(u,\sigma)=\lambda_{2}(u,\sigma)=u$ and an incomplete set of right eigenvectors (we can take 
$\left(\begin{array}{c} 
                       1 \\
                       0\end{array}\right)$ to be a right eigenvector).\\
In \cite{z1}, a class of 2x2 systems in conservative form having an incomplete set of eigenvectors everywhere has been considered. These systems exhibit
development of singular concentrations. We expect a similar kind of development of singular concentration for the system \eqref{e1.2}. But the analysis in \cite{z1} cannot be applied directly due to the following two reasons:\\
1. The system \eqref{e1.2} is in nonconservative form and hence we need to give a suitable meaning to the nonconservative products which in general lead to different solutions depending upon the meaning attached (see \cite{d1},\cite{r1},\cite{v1}).\\
2. One of the assumptions that has been used in \cite{z1} is that the eigenvalue should have vanishing directional derivative along a right eigenvector. But the eigenvalue $u$ in this case has a nonvanishing directional derivative along any right eigenvector. Thus the assumption is not satisfied.\\

Another reason to expect singular solutions for the system \eqref{e1.2} comes from studying the behaviour as $k\rightarrow 0$ of the 
shock and rarefaction curves obtained for the Riemann problem for the system \eqref{e1.1}. In \cite{j2}, the Riemann problem for the system \eqref{e1.1} has been studied using Volpert's product. Starting with Riemann type initial data $$(u(x,0),\sigma(x,0))=\begin{cases} (u_{L},\sigma_{L}),\,\,\ x<0\\
                        (u_{R},\sigma_{R}),\,\,\ x>0.
                       \end{cases}$$
the shock curves $S_{1}(u_{L},\sigma_{L}),S_{2}(u_{L},\sigma_{L})$ and the rarefaction curves $R_{1}(u_{L},\sigma_{L}),$\\
$R_{2}(u_{L},\sigma_{L})$ can be written down in the $u-\sigma$ plane as in \cite{j2}:
\begin{equation}
 \begin{aligned}
  &R_{1}(u_{L},\sigma_{L}):\ \sigma=\sigma_{L}+k(u-u_{L}),\ \ \ u>u_{L},\\
  &R_{2}(u_{L},\sigma_{L}):\ \sigma=\sigma_{L}-k(u-u_{L}),\ \ \ u>u_{L},\\
  &S_{1}(u_{L},\sigma_{L}):\ \sigma=\sigma_{L}+k(u-u_{L}),\ \ \ u<u_{L},\\
  &S_{2}(u_{L},\sigma_{L}):\ \sigma=\sigma_{L}-k(u-u_{L}),\ \ \ u<u_{L}.
 \end{aligned}
\label{e1.3}
\end{equation}
Now as $k \rightarrow 0$, it seems from the above curves that we cannot have a jump in $\sigma$ and therefore the Riemann problem might not be solvable
using Volpert's product. That this is indeed true when $k=0$, that is, for the system \eqref{e1.2} follows from the following:\\
\\
For the solution of the Riemann problem using Volpert's product the following relations have to be satisfied (see \cite{d1})
$$-s[u]+[\frac{u^2}{2}]-[\sigma]=0,\\
  -s[\sigma]+\frac{1}{2}(u_{L}+u_{R})[\sigma]=0$$ 
where $s$ denotes the speed of the discontinuity and $[w]$ denotes the jump in the function $w$ across the discontinuity. It follows from the above relations that $[\sigma]=0$ thus proving what we expected.\\
\\
This observation again makes us suspect that there might be development of singular concentrations even if we start with Riemann type initial data.\\
\\
In what follows our main aim is to construct \textit{generalised $\delta-$shock wave solutions} for the systems \eqref{e1.1} and \eqref{e1.2} and examine the role played by the constant $k$.
We use the \textit{method of weak asymptotics} (see \cite{a1},\cite{d2},\cite{k1}) as the main tool. Depending upon the cases, we shall sometimes use the complex-valued \textit{weak asymptotic solutions} (see \cite{k1}) for our construction.\\
\\
The plan of the paper is as follows: In Section 2, the notions of the weak asymptotic solutions, generalised $\delta-$shock wave type solutions are recalled and a brief sketch
of the method of weak asymptotics is given. In Section 3, a few weak asymptotic expansions are proved which are crucially used in the construction of singular solutions. 
In Section 4, existence of generalised $\delta-$shock wave type solutions for the systems \eqref{e1.1} and \eqref{e1.2}
is proved. The role played by $k$ is examined and the development of singular solutions from Riemann type initial data is exhibited. 

\section{A brief discussion on the method of weak asymptotics}
In this section, we recall the definitions of weak asymptotic solutions and generalised $\delta-$shock wave type solutions and discuss
the method of weak asymptotics (see \cite{a1},\cite{d2},\cite{k1}).\\  
Let $\mathcal{D}$ and $\mathcal{D}^{\prime}$ denote the space of smooth functions of compact support and the space of distributions respectively.\\
\\
Let $O_{\mathcal{D}^{\prime}}(\epsilon^{\alpha})$ denote the collection of distributions $f(x,t,\epsilon) \in \mathcal{D}^{\prime}(\mathbb{R})$ such that for any test function
$\varphi(x) \in \mathcal{D}(\mathbb{R})$ the estimate $$\langle f(x,t,\epsilon),\varphi(x)\rangle=O(\epsilon^{\alpha})$$
holds and is uniform with respect to $t$. The relation $o_{\mathcal{D}^{\prime}}(\epsilon^{\alpha})$ is interpreted similarly.\\
\\
\textbf{Definition 2.1}.(\cite{k1}) A pair of smooth complex-valued (real-valued) functions \\
$(u(x,t,\epsilon),\sigma(x,t,\epsilon))$ is called a \textit{weak asymptotic solution}
of the system \eqref{e1.1} with the initial data $(u(x,0),\sigma(x,0))$ if
\begin{equation}
 \begin{aligned}
  &\frac{\partial u(x,t,\epsilon)}{\partial t}+u(x,t,\epsilon)\frac{\partial u(x,t,\epsilon)}{\partial x}-\frac{\partial \sigma(x,t,\epsilon)}{\partial x}=o_{\mathcal{D}^{\prime}}(1), \\
  &\frac{\partial \sigma(x,t,\epsilon)}{\partial t}+u(x,t,\epsilon)\frac{\partial \sigma(x,t,\epsilon)}{\partial x}-k^{2}\frac{\partial u(x,t,\epsilon)}{\partial x}=o_{\mathcal{D}^{\prime}}(1),\\
  &u(x,0,\epsilon)-u(x,0)=o_{\mathcal{D}^{\prime}}(1),\\
  &\sigma(x,0,\epsilon)-\sigma(x,0)=o_{\mathcal{D}^{\prime}}(1),\ \epsilon \rightarrow 0.
   \end{aligned}
 \label{e2.1}
\end{equation}
\\
Similarly, for the system \eqref{e1.2} we define\\
\textbf{Definition 2.2}.(\cite{k1}) A pair of smooth complex-valued (real-valued) functions \\
$(u(x,t,\epsilon),\sigma(x,t,\epsilon))$ is called a \textit{weak asymptotic solution}
of the system \eqref{e1.2} with the initial data $(u(x,0),\sigma(x,0))$ if
\begin{equation}
 \begin{aligned}
  &\frac{\partial u(x,t,\epsilon)}{\partial t}+u(x,t,\epsilon)\frac{\partial u(x,t,\epsilon)}{\partial x}-\frac{\partial \sigma(x,t,\epsilon)}{\partial x}=o_{\mathcal{D}^{\prime}}(1), \\
  &\frac{\partial \sigma(x,t,\epsilon)}{\partial t}+u(x,t,\epsilon)\frac{\partial \sigma(x,t,\epsilon)}{\partial x}=o_{\mathcal{D}^{\prime}}(1),\\
  &u(x,0,\epsilon)-u(x,0)=o_{\mathcal{D}^{\prime}}(1),\\
  &\sigma(x,0,\epsilon)-\sigma(x,0)=o_{\mathcal{D}^{\prime}}(1),\ \epsilon \rightarrow 0.
   \end{aligned}
 \label{e2.2}
\end{equation}
\\
\textbf{Definition 2.3}. A pair of real-valued distributions $(u(x,t),\sigma(x,t))\in C(\mathbb{R}_{+};\mathcal{D}^{\prime}(\mathbb{R}))$
is called a \textit{generalised solution} of the systems \eqref{e1.1} or \eqref{e1.2} if it is the weak limit
(limit in the sense of distributions) of a weak asymptotic solution $(u(x,t,\epsilon),\sigma(x,t,\epsilon))$ as $\epsilon \rightarrow 0$.\\

\subsection{The method of weak asymptotics} Instead of writing it down schematically (see \cite{a1}), here we shall try to illustrate the method of weak asymptotics by applying it to 
the systems \eqref{e1.1} or \eqref{e1.2} with initial data of the form
\begin{equation}
 \begin{aligned}
  &u(x,0)=u_{0}+u_{1}H(-x),\\
  &\sigma(x,0)=\sigma_{0}+\sigma_{1}H(-x)+e^{0}\delta(x).
 \end{aligned}
 \label{e2.3}
 \end{equation}
where $u_{0},u_{1},\sigma_{0},\sigma_{1},e^{0}$ are constants. In this case we seek solutions in the form of the singular ansatz
\begin{equation}
 \begin{aligned}
  &u(x,t)=u_{0}+u_{1}H(-x+\phi(t)),\\
  &\sigma(x,t)=\sigma_{0}+\sigma_{1}H(-x+\phi(t))+e(t)\delta(x-\phi(t))
 \end{aligned}
\label{e2.4}
\end{equation}
Such a solution will be called a \textit{generalised $\delta-$shock wave type solution} for the systems \eqref{e1.1} or \eqref{e1.2}.\\
The first step is to regularize the Heaviside functions and delta distributions and add \textit{correction} terms so as to form a 
smooth ansatz for the weak asymptotic solutions in the form:
\begin{equation}
 \begin{aligned}
 & u(x,t,\epsilon)=u_{0}+u_{1}H_{u}(-x+\phi(t),\epsilon)+R_{u}(x,t,\epsilon),\\
 & \sigma(x,t,\epsilon)=\sigma_{0}+\sigma_{1}H_{\sigma}(-x+\phi(t),\epsilon)+e(t)\delta(x-\phi(t),\epsilon)+R_{\sigma}(x,t,\epsilon).
 \end{aligned}
 \label{e2.5}
\end{equation}
Here $H_{u},H_{\sigma}$ are regularizations of the Heaviside function, $\delta(x,\epsilon)$ is a regularization of the delta function and
$R_{u},R_{\sigma}$ are \textit{correction} terms chosen so as to satisfy the conditions:
$$R_{i}(x,t,\epsilon)=o_{\mathcal{D}^{\prime}}(1),\ \frac{\partial R_{i}(x,t,\epsilon)}{\partial t}=o_{\mathcal{D}^{\prime}}(1),\ 
\epsilon \rightarrow 0,\ i=u,\sigma.$$
The next step is to prove the existence of a weak asymptotic solution by substituting the smooth ansatz in place of u and $\sigma$ in the left hand side
of the systems and finding out the functions $\phi(t)$ and $e(t)$.\\
Once this step is completed, we obtain a generalised $\delta-$shock wave type solution of the form \eqref{e2.4} as the distributional limit
of the weak asymptotic solution as $\epsilon \rightarrow 0$.

\section{The regularizations and weak asymptotic expansions}
In this section, we describe the regularizations and correction term to be used in the next section and prove a few weak asymptotic expansions for
various products containing them.\\
As mentioned in Section 2, we seek \textit{generalised $\delta-$shock wave type solutions} of the form
\begin{equation}
 \begin{aligned}
  &u(x,t)=u_{0}+u_{1}H(-x+\phi(t)),\\
  &\sigma(x,t)=\sigma_{0}+\sigma_{1}H(-x+\phi(t))+e(t)\delta(x-\phi(t))
 \end{aligned}
 \label{e3.1}
\end{equation}
 for the systems \eqref{e1.1} and \eqref{e1.2} with initial data of the form
 \begin{equation}
 \begin{aligned}
  &u(x,0)=u_{0}+u_{1}H(-x),\\
  &\sigma(x,0)=\sigma_{0}+\sigma_{1}H(-x)+e^{0}\delta(x).
 \end{aligned}
 \label{e3.2}
 \end{equation}
Here $\phi(t)$ and $e(t)$ are smooth functions to be determined.
To apply the method of weak asymptotics, we suggest a smooth ansatz of the form (we do not take any correction term in the expression for $\sigma(x,t,\epsilon)$)
\begin{equation}
 \begin{aligned}
 & u(x,t,\epsilon)=u_{0}+u_{1}H_{u}(-x+\phi(t),\epsilon)+p(t)R(x-\phi(t),\epsilon),\\
 & \sigma(x,t,\epsilon)=\sigma_{0}+\sigma_{1}H_{\sigma}(-x+\phi(t),\epsilon)+e(t)\delta(x-\phi(t),\epsilon).
 \end{aligned}
 \label{e3.3}
\end{equation}
Here $H_{u}(.,\epsilon),H_{\sigma}(.,\epsilon)$ are regularizations of the Heaviside function, $\delta(.,\epsilon)$ is the regularization
of the delta distribution and $p(t)R(.,\epsilon)$ is a correction term where $p(t)$ is a smooth function (complex-valued or real-valued)
to be chosen afterwards.\\
The regularizations of the delta distribution and the correction term used in the proofs are same as in \cite{k1}. The regularization of
the Heaviside function has a subtle change though.\\

Let $\omega:\mathbb{R}\rightarrow \mathbb{R}$ be a non-negative, smooth, even function with support in $(-1,1)$ and satisfying
$$\int_{\mathbb{R}}\omega(x)dx=1.$$
Let $\omega_{0}=\int_{\mathbb{R}}\omega^{2}(x)dx$ and let 
\begin{equation}
 R(x,t,\epsilon)=\frac{1}{\sqrt{\epsilon}}\omega(\frac{x-2\epsilon}{\epsilon}),\ \
\delta(x,\epsilon)=\frac{1}{\epsilon}\omega(\frac{x+2\epsilon}{\epsilon}).
\label{e3.4}
\end{equation}
Since by definition, $R$ is independent of $t$, we henceforth denote it by $R(x,\epsilon)$.\\
An easy calculation using test functions then shows that $R(x,\epsilon)=o_{\mathcal{D}^{\prime}}(\epsilon)$ and since $R$ is independent of t, we also have
$\frac{\partial R(x,\epsilon)}{\partial t}=o_{\mathcal{D}^{\prime}}(\epsilon)$. Thus it satisfies the criteria for being a \textit{correction}
term as stated in Section 2.\\
Also it can be easily checked that $$\frac{\partial R(x,\epsilon)}{\partial x}=o_{\mathcal{D}^{\prime}}(\epsilon),\ R^{2}(x,\epsilon)=\omega_{0}\delta(x)+o_{\mathcal{D}^{\prime}}(\epsilon),\ R(x,\epsilon)\frac{\partial R(x,\epsilon)}{\partial x}=\frac{1}{2}\omega_{0}\delta^{\prime}(x)+o_{\mathcal{D}^{\prime}}(\epsilon)$$
and that $$\delta(x,\epsilon)=\delta(x)+o_{\mathcal{D}^{\prime}}(\epsilon),\ \ \frac{\partial \delta(x,\epsilon)}{\partial x}=\delta^{\prime}(x)+o_{\mathcal{D}^{\prime}}(\epsilon).$$
An important observation we need to make at this point is that the supports of $R(x,\epsilon)$ and $\delta(x,\epsilon)$ are disjoint. Therefore
we have $$R(x,\epsilon)\delta(x,\epsilon)=0,\ R(x,\epsilon)\frac{\partial \delta(x,\epsilon)}{\partial x}=0.$$
Let $c=(\frac{1}{2}-\frac{\sigma_{1}}{u_{1}^{2}})$.\\
Let us define the smooth function $H(x,\epsilon)$ as follows: 
\begin{equation}
H(x,\epsilon)=\begin{cases} 1,\ \ x\leq -4\epsilon\\
                           c,\ \ -3\epsilon\leq x \leq 3\epsilon\\
                           0,\ \ x\geq 4\epsilon
                           \end{cases}
\label{e3.5}
\end{equation}
and is continued smoothly in the regions $(-4\epsilon,-3\epsilon)$ and $(3\epsilon,4\epsilon)$.\\
We take $H_{u}(x,\epsilon)=H_{\sigma}(x,\epsilon)=H(x,\epsilon)$. Again a little bit of calculation shows that
$$H(x,\epsilon)=H(x)+o_{\mathcal{D}^{\prime}}(\epsilon),\ \frac{\partial H(x,\epsilon)}{\partial x}=\delta(x)+o_{\mathcal{D}^{\prime}}(\epsilon),\ 
H(x,\epsilon)\frac{\partial H(x,\epsilon)}{\partial x}=\frac{1}{2}\delta(x)+o_{\mathcal{D}^{\prime}}(\epsilon)$$ 
Since the supports of $R(x,\epsilon)$ and $\delta(x,\epsilon)$ are contained in $(-3\epsilon,3\epsilon)$, it again follows that
\begin{equation}
 \begin{aligned}
  &H(x,\epsilon)\frac{\partial R(x,\epsilon)}{\partial x}=c\frac{\partial R(x,\epsilon)}{\partial x}=o_{\mathcal{D}^{\prime}}(\epsilon),\\
  &R(x,\epsilon)\frac{\partial H(x,\epsilon)}{\partial x}=0.R(x,t,\epsilon)=o_{\mathcal{D}^{\prime}}(\epsilon),\\
  &H(x,\epsilon)\frac{\partial \delta(x,\epsilon)}{\partial x}=c\delta^{\prime}(x)+o_{\mathcal{D}^{\prime}}(\epsilon).
  \end{aligned}
\notag
\end{equation}
From the above discussions we then have the following lemma\\
\\
\textbf{Lemma 3.1}.\textit{ Choosing the regularizations and corrections as in \eqref{e3.4} and \eqref{e3.5} we have the following weak
asymptotic expansions:
\begin{equation}
 \begin{aligned}
 &R(x,\epsilon)=o_{\mathcal{D}^{\prime}}(1),\ \ \frac{\partial R(x,\epsilon)}{\partial x}=o_{\mathcal{D}^{\prime}}(1),\\
 &R^{2}(x,\epsilon)=\omega_{0}\delta(x)+o_{\mathcal{D}^{\prime}}(1),\\
 &R(x,\epsilon)\frac{\partial R(x,\epsilon)}{\partial x}=\frac{1}{2}\omega_{0}\delta^{\prime}(x)+o_{\mathcal{D}^{\prime}}(1),\\
 &\delta(x,\epsilon)=\delta(x)+o_{\mathcal{D}^{\prime}}(1),\ \ \frac{\partial \delta(x,\epsilon)}{\partial x}=\delta^{\prime}(x)+o_{\mathcal{D}^{\prime}}(1),\\
 &R(x,\epsilon)\delta(x,\epsilon)=0,\ R(x,\epsilon)\frac{\partial \delta(x,\epsilon)}{\partial x}=0,\\
 &H(x,\epsilon)=H(x)+o_{\mathcal{D}^{\prime}}(1),\ \frac{\partial H(x,\epsilon)}{\partial x}=\delta(x)+o_{\mathcal{D}^{\prime}}(1),\\ 
 &H(x,\epsilon)\frac{\partial H(x,\epsilon)}{\partial x}=\frac{1}{2}\delta(x)+o_{\mathcal{D}^{\prime}}(1),\\
 &H(x,\epsilon)\frac{\partial R(x,\epsilon)}{\partial x}=o_{\mathcal{D}^{\prime}}(1),\ R(x,\epsilon)\frac{\partial H(x,\epsilon)}{\partial x}=o_{\mathcal{D}^{\prime}}(1),\\
 &H(x,\epsilon)\frac{\partial \delta(x,\epsilon)}{\partial x}=c\delta^{\prime}(x)+o_{\mathcal{D}^{\prime}}(1),\ \epsilon \rightarrow 0.
 \end{aligned}
 \end{equation}}

\section{Generalised Delta-shock wave type solutions via construction of weak asymptotic solutions}
In this section, we construct $\delta-$shock wave type solutions for the systems \eqref{e1.1} and \eqref{e1.2} using the method of weak 
asymptotics.\\
For the rest of the discussion, we use the convention $[v]=v_{L}-v_{R}$, where $v_{L},v_{R}$ respectively denote the left and right states of $v$ across the discontinuity.
\begin{theorem}
 For $t\in [0,\infty)$, the Cauchy problem \eqref{e1.1},\eqref{e3.2} has a weak asymptotic solution \eqref{e3.3} with $\phi(t),e(t)\ and\ p(t)$
given by the relations 
\begin{equation}
\begin{aligned}
&\dot{\phi}(t)=\frac{[\frac{u^{2}}{2}]-[\sigma]}{[u]},\  \dot{e}(t)=\frac{\sigma_{1}^{2}}{u_{1}}-k^{2}u_{1},\\
&\frac{1}{2}p^{2}(t)\omega_{0}-e(t)=0,
\end{aligned}
\label{e4.1}
\end{equation}
where $\omega_{0}$ is a positive constant $($defined in Section $3$ $)$.
\end{theorem}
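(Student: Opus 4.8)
My plan is to carry out the weak-asymptotics scheme of Section 2 directly on the ansatz \eqref{e3.3}: substitute \eqref{e3.3} into the left-hand sides of the two equations of \eqref{e1.1}, expand every product, use Lemma 3.1 to replace each product of regularizations by its weak asymptotic expansion, and finally determine $\phi(t)$, $e(t)$, $p(t)$ so that the coefficients of the surviving singular distributions vanish. What remains is then $o_{\mathcal{D}'}(1)$, which together with the initial conditions is exactly what Definition 2.1 demands.

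For the computation, write $\xi=x-\phi(t)$ and recall $H_u=H_\sigma=H(\cdot,\epsilon)$. Differentiating \eqref{e3.3} gives
$$u_x=u_1\partial_x H(-\xi,\epsilon)+p(t)\partial_x R(\xi,\epsilon),\qquad u_t=-\dot\phi(t)u_x+\dot p(t)R(\xi,\epsilon),$$
and the analogous identities for $\sigma_x,\sigma_t$ with $\sigma_1,e(t),\delta(\xi,\epsilon)$ in place of $u_1,p(t),R(\xi,\epsilon)$. Substituting these into $u_t+uu_x-\sigma_x$ and into $\sigma_t+u\sigma_x-k^2u_x$ produces, besides linear terms, precisely the nonconservative products $H\partial_x H$, $H\partial_x R$, $R\partial_x H$, $R\partial_x R$, $H\partial_x\delta$, $R\partial_x\delta$. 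The one point needing care is that Lemma 3.1 is stated for functions of a single argument, whereas the Heaviside now appears with the reflected argument $-\xi$; but the supports of $R(\xi,\epsilon)$ and $\delta(\xi,\epsilon)$ both lie in the symmetric plateau $(-3\epsilon,3\epsilon)$, on which $H(\pm\xi,\epsilon)=c$, so all the expansions of Lemma 3.1 carry over verbatim, only the sign attached to the limiting $\delta$ and $\delta'$ being affected by the reflection (and that sign is fixed once and for all by the regularization). Using $H\partial_x H=\tfrac12\delta+o_{\mathcal{D}'}(1)$, $R\partial_x R=\tfrac12\omega_0\delta'+o_{\mathcal{D}'}(1)$, $H\partial_x\delta=c\delta'+o_{\mathcal{D}'}(1)$, together with the fact that $H\partial_x R$, $R\partial_x H$, $R\partial_x\delta$ and the stray terms $\dot p R$, $p\dot\phi\,\partial_x R$, $p\,\partial_x R$ are all $o_{\mathcal{D}'}(1)$, each left-hand side becomes a linear combination of $\delta(x-\phi(t))$ and $\delta'(x-\phi(t))$ modulo $o_{\mathcal{D}'}(1)$; no Heaviside or constant remainder survives because every term already carries at least one derivative.

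It then remains to set the two coefficients to zero in each equation. In the first equation the only sources of $\delta'(x-\phi(t))$ are $p^2 R\partial_x R$ (from $uu_x$) and $-e\,\partial_x\delta$ (from $-\sigma_x$), so its $\delta'$-coefficient is $\tfrac12\omega_0p^2(t)-e(t)$, which must vanish; this is the constraint $\tfrac12p^2(t)\omega_0-e(t)=0$ of \eqref{e4.1}. Using the jump values $[u]=u_1$, $[\tfrac{u^2}{2}]=u_0u_1+\tfrac12u_1^2$, $[\sigma]=\sigma_1$, the $\delta(x-\phi(t))$-coefficient of the first equation vanishes exactly when $-\dot\phi(t)[u]+[\tfrac{u^2}{2}]-[\sigma]=0$, i.e. $\dot\phi(t)=\frac{[\frac{u^2}{2}]-[\sigma]}{[u]}$; since the right-hand side equals $u_0+u_1c$ with $c=\tfrac12-\sigma_1/u_1^2$ as in \eqref{e3.5}, one sees why the plateau value had to be chosen as $c$ rather than the naive value $\tfrac12$. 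Substituting $\dot\phi=u_0+u_1c$ into the second equation, its $\delta'(x-\phi(t))$-coefficient equals $e(t)(u_0+u_1c-\dot\phi(t))$ and hence vanishes automatically, while its $\delta(x-\phi(t))$-coefficient, after using in addition $\tfrac12-c=\sigma_1/u_1^2$, vanishes exactly when $\dot e(t)=\frac{\sigma_1^2}{u_1}-k^2u_1$, the last relation of \eqref{e4.1}. Finally the initial conditions in \eqref{e2.1} follow by letting $\epsilon\to0$ in \eqref{e3.3} at $t=0$ with $\phi(0)=0$: because $H(\cdot,\epsilon)\to H$, $\delta(\cdot,\epsilon)\to\delta$ and $R(\cdot,\epsilon)=o_{\mathcal{D}'}(1)$, one recovers \eqref{e3.2} precisely when $e(0)=e^0$, which fixes the integration constant of the (linear) ODE for $e$; $p$ is then obtained from $p^2=2e/\omega_0$, possibly complex-valued when $e^0<0$, which is harmless since $p$ enters the surviving asymptotics only through $p^2$.

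The main obstacle is the bookkeeping of the second step: correctly tracking all the nonconservative products and the signs forced by the reflected argument $-x+\phi(t)$, and checking that every $o_{\mathcal{D}'}(1)$ remainder is uniform in $t$ on the time interval under consideration. The algebraic heart of the matter is the compatibility of the two averaged values produced by Lemma 3.1 — the value $\tfrac12$ for $H\partial_x H$ and the value $c$ for $H\partial_x\delta$: it is exactly the choice $c=\tfrac12-\sigma_1/u_1^2$ that makes the Rankine--Hugoniot-type formula for $\dot\phi$ consistent with the $\delta'$-balance of the second equation while simultaneously producing the clean expression for $\dot e$, and this is the subtle change in the regularization of the Heaviside alluded to in Section 3.
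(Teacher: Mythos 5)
Your proposal is correct and follows essentially the same route as the paper's proof: substitute the smooth ansatz \eqref{e3.3} into \eqref{e1.1}, reduce all the products via Lemma 3.1, set the $\delta$- and $\delta'$-coefficients to zero (obtaining $\dot\phi$ and $\tfrac12 p^{2}\omega_{0}-e=0$ from the first equation), and use $\dot\phi=u_{0}+u_{1}c$ to see that the $\delta'$-coefficient of the second equation vanishes automatically, leaving $\dot e=\tfrac{\sigma_{1}^{2}}{u_{1}}-k^{2}u_{1}$. You also correctly identify the role of the plateau value $c=\tfrac12-\sigma_{1}/u_{1}^{2}$, which is exactly the observation the paper highlights.
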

 
\begin{proof}
 To begin with, let's recall the form of the smooth ansatz:
 \begin{align}
 & u(x,t,\epsilon)=u_{0}+u_{1}H_{u}(-x+\phi(t),\epsilon)+p(t)R(x-\phi(t),\epsilon),\notag \\
 & \sigma(x,t,\epsilon)=\sigma_{0}+\sigma_{1}H_{\sigma}(-x+\phi(t),\epsilon)+e(t)\delta(x-\phi(t),\epsilon).\notag
 \end{align}
Then the partial derivatives of $u(x,t,\epsilon)$ and $\sigma(x,t,\epsilon)$ are given by
\begin{equation}
\begin{aligned}
  \frac{\partial u(x,t,\epsilon)}{\partial t} & =u_{1}\dot{\phi}(t)\frac{dH_{u}(-x+\phi(t),\epsilon)}{d\xi}+\dot{p}(t)R(x-\phi(t),\epsilon)-p(t)\dot{\phi}(t)\frac{dR(x-\phi(t),\epsilon)}{d\xi},     \notag \\
  \frac{\partial u(x,t,\epsilon)}{\partial x} &=-u_{1}\frac{dH_{u}(-x+\phi(t),\epsilon)}{d\xi}+p(t)\frac{dR(x-\phi(t),\epsilon)}{d\xi}, \notag \\
  \frac{\partial \sigma(x,t,\epsilon)}{\partial t} &=\sigma_{1}\dot{\phi}(t)\frac{dH_{\sigma}(-x+\phi(t),\epsilon)}{d\xi}+\dot{e}(t)\delta(x-\phi(t),\epsilon)-e(t)\dot{\phi}(t)\frac{d\delta(x-\phi(t),\epsilon)}{d\xi},   \notag \\
  \frac{\partial \sigma(x,t,\epsilon)}{\partial x} &=-\sigma_{1}\frac{dH_{\sigma}(-x+\phi(t),\epsilon)}{d\xi}+e(t)\frac{d\delta(x-\phi(t),\epsilon)}{d\xi},
 \end{aligned}
\end{equation}
where $\frac{du(.,\epsilon)}{d\xi}$ denotes the derivative of $u(.,\epsilon)$ with respect to the first component.\\ 
Substituting these relations into the left-hand side of the system \eqref{e1.1}, we have the following
\begin{equation}
 \begin{aligned}
  &\frac{\partial u(x,t,\epsilon)}{\partial t} +u(x,t,\epsilon)\frac{\partial u(x,t,\epsilon)}{\partial x}-\frac{\partial \sigma(x,t,\epsilon)}{\partial x}   \\
  &=\ u_{1}\dot{\phi}(t)\frac{dH_{u}(-x+\phi(t),\epsilon)}{d\xi} +\dot{p}(t)R(x-\phi(t),\epsilon)-p(t)\dot{\phi}(t)\frac{dR(x-\phi(t),\epsilon)}{d\xi}   \\
  &-u_{0}u_{1}\frac{dH_{u}(-x+\phi(t),\epsilon)}{d\xi} -u_{1}^{2}H_{u}(-x+\phi(t),\epsilon)\frac{dH_{u}(-x+\phi(t),\epsilon)}{d \xi}   \\
  & +u_{0} p(t)\frac{dR(x-\phi(t),\epsilon)}{d\xi} +u_{1}p(t)H_{u}(-x+\phi(t),\epsilon)\frac{dR(x-\phi(t),\epsilon)}{d \xi}    \\
  &-p(t)u_{1}R(x-\phi(t),\epsilon)\frac{dH_{u}(-x+\phi(t),\epsilon)}{d \xi} +p^{2}(t)R(x-\phi(t),\epsilon)\frac{dR(x-\phi(t),\epsilon)}{d \xi}    \\
  & +\sigma_{1}\frac{dH_{\sigma}(-x+\phi(t),\epsilon)}{d \xi}-e(t)\frac{d\delta(x-\phi(t),\epsilon)}{d\xi}
 \end{aligned}
\label{e4.2}
\end{equation}
 and
 \begin{equation}
 \begin{aligned} 
  &\frac{\partial \sigma(x,t,\epsilon)}{\partial t}+u(x,t,\epsilon)\frac{\partial \sigma(x,t,\epsilon)}{\partial x}-k^{2}\frac{\partial u(x,t,\epsilon)}{\partial x}  \\
  &=\ \sigma_{1}\dot{\phi}(t)\frac{dH_{\sigma}(-x+\phi(t),\epsilon)}{d\xi}+\dot{e}(t)\delta(x-\phi(t),\epsilon)
  -e(t)\dot{\phi}(t)\frac{d\delta(x-\phi(t),\epsilon)}{d\xi} \\
  &-u_{0}\sigma_{1}\frac{dH_{\sigma}(-x+\phi(t),\epsilon)}{d \xi}-u_{1}\sigma_{1}H_{u}(-x+\phi(t),\epsilon)\frac{dH_{\sigma}(-x+\phi(t),\epsilon)}{d \xi} \\
  &+u_{0}e(t)\frac{d\delta(x-\phi(t),\epsilon)}{d\xi}+u_{1}e(t)H_{u}(-x+\phi(t),\epsilon)\frac{d\delta(x-\phi(t),\epsilon)}{d\xi}  \\
  &-\sigma_{1}p(t)R(x-\phi(t),\epsilon)\frac{dH_{\sigma}(-x+\phi(t),\epsilon)}{d \xi}+p(t)e(t)R(x-\phi(t),\epsilon)\frac{d\delta(x-\phi(t),\epsilon)}{d\xi}  \\
  &+k^{2}u_{1}\frac{dH_{u}(-x+\phi(t),\epsilon)}{d\xi}-k^{2}p(t)\frac{dR(x-\phi(t),\epsilon)}{d\xi}.
\end{aligned}
\label{e4.3}
\end{equation}
Now using the weak asymptotics relations from Lemma 3.1 in the relation \eqref{e4.2}, we obtain
\begin{equation}
 \begin{aligned}
  &\frac{\partial u(x,t,\epsilon)}{\partial t}+u(x,t,\epsilon)\frac{\partial u(x,t,\epsilon)}{\partial x}-\frac{\partial \sigma(x,t,\epsilon)}{\partial x} \\
  &=\ \{u_{1}\dot{\phi}(t)-u_{0}u_{1}-\frac{1}{2}u_{1}^{2}+\sigma_{1}\}\delta(x-\phi(t))+\{\frac{1}{2}p^{2}(t)\omega_{0}-e(t)\}\dot{\delta}(x-\phi(t))\\
  &+o_{\mathcal{D}^{\prime}}(1).
\end{aligned}
\notag
\end{equation}
Setting the coefficients of $\delta$ and $\dot{\delta}$ in the above relation to be zero, we obtain
\begin{equation}
 \begin{aligned}
  &u_{1}\dot{\phi}(t)-u_{0}u_{1}-\frac{1}{2}u_{1}^{2}+\sigma_{1}=0, \\
  &\frac{1}{2}p^{2}(t)\omega_{0}-e(t)=0.
   \end{aligned}
\notag
\end{equation}
The first equation above, when rewritten, gives
\begin{equation}
 \dot{\phi}(t)=\frac{[\frac{u^{2}}{2}]-[\sigma]}{[u]}.
\notag
\end{equation}
Substituting $\dot{\phi}(t)$ from above in the relation \eqref{e4.3} and observing that
\begin{equation}
\begin{aligned}
 u_{1}e(t)H_{u}(-x+\phi(t),\epsilon)\frac{d\delta(x-\phi(t),\epsilon)}{d\xi}&+u_{0}e(t)\frac{d\delta(x-\phi(t),\epsilon)}{d\xi}\\
&-e(t)\dot{\phi}(t)\frac{d\delta(x-\phi(t),\epsilon)}{d\xi}=o_{\mathcal{D}^{\prime}}(\epsilon),
\end{aligned}
\notag
\end{equation}
(the choice of $c$ as in Section 3 helps us in getting this asymptotics)
we have
\begin{equation}
 \begin{aligned}
  &\frac{\partial \sigma(x,t,\epsilon)}{\partial t}+u(x,t,\epsilon)\frac{\partial \sigma(x,t,\epsilon)}{\partial x}-k^{2}\frac{\partial u(x,t,\epsilon)}{\partial x} \\
  &=\{\dot{e}(t)+\sigma_{1}\dot{\phi}(t)-\frac{1}{2}u_{1}\sigma_{1}-u_{0}\sigma_{1}+k^{2}u_{1}\}\delta(x-\phi(t))
  +o_{\mathcal{D}^{\prime}}(1),\epsilon \rightarrow 0.
 \end{aligned}
 \notag
\end{equation}
Setting the coefficient of $\delta$ in the above relation to be zero, we obtain
\begin{equation}
  \dot{e}(t)+\sigma_{1}\dot{\phi}(t)-\frac{1}{2}u_{1}\sigma_{1}-u_{0}\sigma_{1}+k^{2}u_{1}=0, \\
\notag
\end{equation}
which on simplification gives
\begin{equation}
  \dot{e}(t)=\frac{\sigma_{1}^{2}}{u_{1}}-k^{2}u_{1}.
\notag
\end{equation}
Therefore we find that the smooth ansatz \eqref{e3.3} is a $\mathit weak\  asymptotic\ solution$
provided $p(t),\phi(t),e(t)$ can be solved from the following equations
\begin{equation}
 \begin{aligned}
  &\dot{\phi}(t)=\frac{[\frac{u^{2}}{2}]-[\sigma]}{[u]}, \\
  &\dot{e}(t)=\frac{\sigma_{1}^{2}}{u_{1}}-k^{2}u_{1},\\
  &\frac{1}{2}p^{2}(t)\omega_{0}-e(t)=0.
   \end{aligned}
\label{e4.4}
\end{equation}
The above ordinary differential equations can be solved with the initial conditions $\phi(0)=0$ and $e(0)=e^{0}$ and we have
\begin{equation}
  \phi(t)=\frac{[\frac{u^{2}}{2}]-[\sigma]}{[u]}t,\ \  e(t)=(\frac{\sigma_{1}^{2}}{u_{1}}-k^{2}u_{1})t+e^{0}.
\label{e4.5}
 \end{equation}
Next substituting $e(t)$ in the last equation of \eqref{e4.4}, we can solve for $p(t)$ taken in the form $p(t)=p_{1}(t)+ip_{2}(t)$ and hence
we have a $\mathit weak\ asymptotic \ solution$ of the system \eqref{e1.1}.
\end{proof}
Since a $\mathit \delta-shock\ wave\ type\ solution$ is the distributional limit of a $\mathit weak$ \\
 $\mathit asymptotic\ solution$, we have from the previous theorem
\begin{theorem}
 For $t\in [0,\infty)$, the Cauchy problem \eqref{e1.1},\eqref{e3.2} has a generalised $\delta-$shock wave type solution \eqref{e3.1} with $\phi(t)\ and \ e(t)$
given by the relations 
\begin{equation}
\phi(t)=\frac{[\frac{u^{2}}{2}]-[\sigma]}{[u]}t,\  e(t)=(\frac{\sigma_{1}^{2}}{u_{1}}-k^{2}u_{1})t+e^{0}.
\label{e4.6}
\end{equation}
\end{theorem}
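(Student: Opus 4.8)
The plan is to obtain the statement as a direct consequence of Theorem 4.1, by passing to the distributional limit $\epsilon\to 0$ in the weak asymptotic solution constructed there and then checking the two requirements of Definition 2.3.

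First I would fix, for each $\epsilon>0$, the weak asymptotic solution $(u(x,t,\epsilon),\sigma(x,t,\epsilon))$ of the form \eqref{e3.3} furnished by Theorem 4.1, with $\phi(t)$ and $e(t)$ as in \eqref{e4.5} and with $p(t)=p_{1}(t)+ip_{2}(t)$ obtained from the last equation of \eqref{e4.4}. Then I would compute the weak limit of each term using Lemma 3.1. Since $H_{u}(\cdot,\epsilon)=H_{\sigma}(\cdot,\epsilon)=H(\cdot,\epsilon)$ with $H(x,\epsilon)=H(x)+o_{\mathcal{D}^{\prime}}(1)$, and $x\mapsto -x+\phi(t)$ is an affine (hence smooth) change of variable, one gets $H_{u}(-x+\phi(t),\epsilon)\to H(-x+\phi(t))$ in $\mathcal{D}^{\prime}$; likewise $\delta(x-\phi(t),\epsilon)\to\delta(x-\phi(t))$; and $R(x-\phi(t),\epsilon)=o_{\mathcal{D}^{\prime}}(1)$, so the correction term $p(t)R(x-\phi(t),\epsilon)$ tends to $0$ in $\mathcal{D}^{\prime}$ — in particular the possibly complex factor $p(t)$ plays no role in the limit, which is therefore real-valued. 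Consequently, as $\epsilon\to 0$,
\begin{equation}
\begin{aligned}
&u(x,t,\epsilon)\longrightarrow u_{0}+u_{1}H(-x+\phi(t)),\\
&\sigma(x,t,\epsilon)\longrightarrow \sigma_{0}+\sigma_{1}H(-x+\phi(t))+e(t)\delta(x-\phi(t))
\end{aligned}
\notag
\end{equation}
in $\mathcal{D}^{\prime}(\mathbb{R})$, which is precisely the singular ansatz \eqref{e3.1} with $\phi(t)$ and $e(t)$ as in \eqref{e4.6}.

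It then remains to verify the bookkeeping in Definition 2.3. For the regularity $(u,\sigma)\in C(\mathbb{R}_{+};\mathcal{D}^{\prime}(\mathbb{R}))$, I would note that $t\mapsto H(-x+\phi(t))$ and $t\mapsto e(t)\delta(x-\phi(t))$ are continuous with values in $\mathcal{D}^{\prime}$, since $\phi$ and $e$ are continuous (indeed affine) in $t$ and translation acts continuously on $\mathcal{D}^{\prime}$; alternatively, the convergence above is uniform in $t$ (as built into the definition of $o_{\mathcal{D}^{\prime}}$) and the approximants are smooth in $t$, so the limit inherits continuity in $t$. For the initial data, $\phi(0)=0$ and $e(0)=e^{0}$ give the value $(u_{0}+u_{1}H(-x),\ \sigma_{0}+\sigma_{1}H(-x)+e^{0}\delta(x))$ at $t=0$, which is \eqref{e3.2}. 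By Definition 2.3 the pair $(u,\sigma)$ is thus a generalised solution of \eqref{e1.1}, and since it has the form \eqref{e3.1} it is a generalised $\delta$-shock wave type solution.

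The step requiring the most care is the uniformity in $t$ of the passage to the limit — that the $o_{\mathcal{D}^{\prime}}(1)$ errors of Lemma 3.1 survive the time-dependent translation by $\phi(t)$ and remain $o(1)$ uniformly on compact time intervals. Since $\phi$ is Lipschitz on such intervals and the error estimates of Section 3 are uniform in the translation parameter, this is routine; there is no genuine analytic obstacle here, the theorem being in essence a corollary of Theorem 4.1.
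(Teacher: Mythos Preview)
Your proposal is correct and follows essentially the same approach as the paper: the paper's proof consists of a single sentence invoking Theorem 4.1 and the fact that a generalised $\delta$-shock wave type solution is by definition the distributional limit of a weak asymptotic solution. Your write-up supplies the details of that limit (via Lemma 3.1) and checks the requirements of Definition 2.3, which the paper leaves implicit.
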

\begin{remark}
 Let $e^{0}=0$. Now $\dot e(t)=0$ in \eqref{e4.1} would imply the existence of shock-wave solution $($in the class of bounded variation functions$)$ of the system \eqref{e1.1}.
But that implies $\sigma_{1}^2=k^2 u_{1}^2$. Imposing Lax's admissibility condition, which in this case is $u_{1}>0$, we get the shock
curves 
\begin{equation}
\begin{aligned}
&S_{1}:[\sigma]=k[u],\\
&S_{2}:[\sigma]=-k[u]. 
\end{aligned}
 \notag
\end{equation}
Also in this case, we have $u\frac{\partial \sigma}{\partial x}=\lim_{\epsilon \to 0}u(x,t,\epsilon)\frac{\partial \sigma(x,t,\epsilon)}{\partial x}=-\sigma_{1}(u_{0}+\frac{u_{1}}{2})\delta$,
which again is the Volpert's product $($the negative sign arises because of the convention on $[\sigma]$ $)$. Thus we recover the results proved in \cite{j2} for the shock-wave case $($see 
\eqref{e1.3}$)$.  
\end{remark}
\begin{remark}$($Overcompressivity condition for $\delta-$shock wave solutions$)$
 We recall that the overcompressivity condition $($see \cite{a1},\cite{k1}$)$ for the $\delta-$shock wave solutions for a $n\times n$ system is
$$\lambda_{k}(v_{R})<\dot \phi(t)<\lambda_{k}(v_{L}),\ \ k=1,..,n.$$
Therefore for the system \eqref{e1.1}, it takes the form
\begin{equation}
 \begin{aligned}
 &u_{0}-k<\frac{[\frac{u^2}{2}]-[\sigma]}{[u]}<u_{0}+u_{1}-k,\\
 &u_{0}+k<\frac{[\frac{u^2}{2}]-[\sigma]}{[u]}<u_{0}+u_{1}+k. 
 \end{aligned}
\notag
\end{equation}
The above relations on simplification yield the conditions: $$u_{1}>2k>0$$ and $$-(\frac{u_{1}}{2}-k)<\frac{\sigma_{1}}{u_{1}}<\frac{u_{1}}{2}-k.$$\\
\end{remark}

Next we prove the existence of a $\mathit weak\ asymptotic\ solution$ for the system \eqref{e1.2}.
\begin{theorem}
For $t\in [0,\infty)$, the Cauchy problem \eqref{e1.2},\eqref{e3.2} has a weak asymptotic solution \eqref{e3.3} with $\phi(t),e(t)\ and\ p(t)$
given by the relations 
\begin{equation}
\begin{aligned}
&\dot{\phi}(t)=\frac{[\frac{u^{2}}{2}]-[\sigma]}{[u]},\  \dot{e}(t)=\frac{\sigma_{1}^{2}}{u_{1}},\\
&\frac{1}{2}p^{2}(t)\omega_{0}-e(t)=0,
\end{aligned}
\label{e4.7}
\end{equation}
where $\omega_{0}$ is a positive constant $($defined in Section $3$ $)$.
\end{theorem}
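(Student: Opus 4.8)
The plan is to imitate, essentially line for line, the proof of Theorem 4.1, now with $k=0$, since the system \eqref{e1.2} is nothing but \eqref{e1.1} with the term $-k^{2}\,\partial u/\partial x$ removed from the second equation while the first equation is left untouched. First I would substitute the smooth ansatz \eqref{e3.3} into the left-hand side of \eqref{e1.2}. The partial derivatives of $u(x,t,\epsilon)$ and $\sigma(x,t,\epsilon)$ are exactly the ones computed in the proof of Theorem 4.1, so the expansion of the first equation coincides with \eqref{e4.2} and that of the second equation is \eqref{e4.3} with its two $k^{2}$-terms deleted.

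Next, applying the weak asymptotic expansions of Lemma 3.1 to the first equation gives, just as before,
\[
\frac{\partial u(x,t,\epsilon)}{\partial t}+u(x,t,\epsilon)\frac{\partial u(x,t,\epsilon)}{\partial x}-\frac{\partial \sigma(x,t,\epsilon)}{\partial x}=\bigl\{u_{1}\dot{\phi}(t)-u_{0}u_{1}-\frac{1}{2}u_{1}^{2}+\sigma_{1}\bigr\}\delta(x-\phi(t))+\bigl\{\frac{1}{2}p^{2}(t)\omega_{0}-e(t)\bigr\}\dot{\delta}(x-\phi(t))+o_{\mathcal{D}^{\prime}}(1),
\]
and equating the two coefficients to zero produces $\frac{1}{2}p^{2}(t)\omega_{0}-e(t)=0$ together with $u_{1}\dot{\phi}(t)=u_{0}u_{1}+\frac{1}{2}u_{1}^{2}-\sigma_{1}$. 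For the data \eqref{e3.2} one has $[u]=u_{1}$, $[\sigma]=\sigma_{1}$ and $[\frac{u^{2}}{2}]=u_{0}u_{1}+\frac{1}{2}u_{1}^{2}$, so the second relation reads $\dot{\phi}(t)=([\frac{u^{2}}{2}]-[\sigma])/[u]$; this is the only point where $u_{1}\neq0$ is needed.

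I would then feed this value of $\dot{\phi}(t)$ into the $k^{2}$-free expansion of the second equation. As in the proof of Theorem 4.1, the choice $c=\frac{1}{2}-\sigma_{1}/u_{1}^{2}$ in the regularization \eqref{e3.5} was arranged precisely so that
\[
u_{1}e(t)H_{u}(-x+\phi(t),\epsilon)\frac{d\delta(x-\phi(t),\epsilon)}{d\xi}+u_{0}e(t)\frac{d\delta(x-\phi(t),\epsilon)}{d\xi}-e(t)\dot{\phi}(t)\frac{d\delta(x-\phi(t),\epsilon)}{d\xi}=o_{\mathcal{D}^{\prime}}(\epsilon),
\]
so that the $\dot{\delta}$-contributions drop out and only $\bigl\{\dot{e}(t)+\sigma_{1}\dot{\phi}(t)-\frac{1}{2}u_{1}\sigma_{1}-u_{0}\sigma_{1}\bigr\}\delta(x-\phi(t))+o_{\mathcal{D}^{\prime}}(1)$ survives; setting this coefficient to zero and substituting $\dot{\phi}(t)$ yields $\dot{e}(t)=\sigma_{1}^{2}/u_{1}$, which differs from \eqref{e4.1} exactly by the absence of the $-k^{2}u_{1}$ term. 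Verifying the displayed $o_{\mathcal{D}^{\prime}}(\epsilon)$ cancellation — which holds because on the support of $\delta(x-\phi(t),\epsilon)$ the regularized Heaviside $H_{u}(-x+\phi(t),\epsilon)$ is identically the constant $c$, so the bracketed prefactor $u_{1}c+u_{0}-\dot{\phi}(t)$ vanishes — is the one genuinely delicate step; everything else is bookkeeping, and the cancellation is already established inside the proof of Theorem 4.1, where it did not use $k$ in any way.

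Finally I would solve the system \eqref{e4.7}: the linear ODEs $\dot{\phi}(t)=([\frac{u^{2}}{2}]-[\sigma])/[u]$ and $\dot{e}(t)=\sigma_{1}^{2}/u_{1}$ integrate with $\phi(0)=0$ and $e(0)=e^{0}$ to give $\phi$ and $e$ explicitly, and $p(t)$ is then recovered algebraically from $p^{2}(t)=2e(t)/\omega_{0}$ by writing $p(t)=p_{1}(t)+ip_{2}(t)$, so that a (possibly complex-valued, when $e(t)<0$) solution is available for every $t$. This displays \eqref{e3.3} as a weak asymptotic solution of \eqref{e1.2},\eqref{e3.2} with $\phi$, $e$, $p$ as in \eqref{e4.7}, completing the proof.
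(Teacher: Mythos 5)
Your proposal is correct and takes exactly the paper's route: the paper's own proof of this theorem simply says ``proceeding as in the proof of Theorem 4.1'' with the $k^{2}$-terms deleted from \eqref{e4.3}, which is precisely what you carry out, including the key cancellation of the $\dot{\delta}$-coefficient via the choice of $c$ and the final integration of the ODEs with $\phi(0)=0$, $e(0)=e^{0}$. The only difference is that you write out the details the paper leaves implicit.
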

\begin{proof}
 Proceeding as in the proof of the Theorem 4.1,
we find that the smooth ansatz \eqref{e3.3} is a $\mathit weak\  asymptotic\ solution$
provided $p(t),\phi(t),e(t)$ can be solved from the following equations
\begin{equation}
 \begin{aligned}
  &\dot{\phi}(t)=\frac{[\frac{u^{2}}{2}]-[\sigma]}{[u]}, \\
  &\dot{e}(t)=\frac{\sigma_{1}^{2}}{u_{1}}, \\
  &\frac{1}{2}p^{2}(t)\omega_{0}-e(t)=0.
 \end{aligned}
\label{e4.8}
\end{equation}
The ordinary differential equations for $\phi(t)$ and $e(t)$ can be solved with the initial conditions $\phi(0)=0$ and $e(0)=e^{0}$ and we have
\begin{equation}
  \phi(t)=\frac{[\frac{u^{2}}{2}]-[\sigma]}{[u]}t,\ \  e(t)=\frac{\sigma_{1}^{2}}{u_{1}}t+e^{0}.
 \notag
 \end{equation}
Next substituting $e(t)$ in the last equation of \eqref{e4.8}, we can solve for $p(t)$ taken in the form $p(t)=p_{1}(t)+ip_{2}(t)$ and hence
we have a $\mathit weak\ asymptotic \ solution$ of the system \eqref{e1.2}.
\end{proof}
Therefore from the previous theorem, we have
\begin{theorem}
For $t\in [0,\infty)$, the Cauchy problem \eqref{e1.2},\eqref{e3.2} has a generalised $\delta-$shock wave type solution \eqref{e3.1} with $\phi(t)\ and \ e(t)$
given by the relations 
\begin{equation}
\phi(t)=\frac{[\frac{u^{2}}{2}]-[\sigma]}{[u]}t,\  e(t)=\frac{\sigma_{1}^{2}}{u_{1}}t+e^{0}.
\label{e4.9}
\end{equation}
\end{theorem}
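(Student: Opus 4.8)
The plan is to deduce Theorem 4.6 from Theorem 4.5 in exactly the way Theorem 4.2 was deduced from Theorem 4.1: a generalised $\delta$-shock wave type solution is, by Definition 2.3, nothing but the distributional limit of a weak asymptotic solution, so the whole content of the proof is to pass to the limit $\epsilon\to0$ in the ansatz \eqref{e3.3} with $\phi(t),e(t),p(t)$ determined by \eqref{e4.8}. First I would invoke Theorem 4.5 to fix a weak asymptotic solution $(u(x,t,\epsilon),\sigma(x,t,\epsilon))$ of \eqref{e1.2},\eqref{e3.2} of the form \eqref{e3.3}, with $\phi(t)=\frac{[\frac{u^{2}}{2}]-[\sigma]}{[u]}\,t$ and $e(t)=\frac{\sigma_{1}^{2}}{u_{1}}\,t+e^{0}$, and with $p(t)=p_{1}(t)+ip_{2}(t)$ solving $\tfrac12 p^{2}(t)\omega_{0}=e(t)$.

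Next I would compute the limit term by term using the weak asymptotic expansions of Lemma 3.1. For the first component, $u_{1}H_{u}(-x+\phi(t),\epsilon)=u_{1}H(-x+\phi(t))+o_{\mathcal{D}'}(1)$, while the correction term satisfies $p(t)R(x-\phi(t),\epsilon)=o_{\mathcal{D}'}(1)$ (since $R(x,\epsilon)=o_{\mathcal{D}'}(1)$ and $p$ is bounded on compact time intervals), so
\begin{equation}
u(x,t,\epsilon)\;\longrightarrow\; u_{0}+u_{1}H(-x+\phi(t))\quad\text{in }\mathcal{D}'(\mathbb{R}).\notag
\end{equation}
For the second component, $\sigma_{1}H_{\sigma}(-x+\phi(t),\epsilon)\to\sigma_{1}H(-x+\phi(t))$ and $e(t)\delta(x-\phi(t),\epsilon)\to e(t)\delta(x-\phi(t))$ by Lemma 3.1, giving $\sigma(x,t,\epsilon)\to\sigma_{0}+\sigma_{1}H(-x+\phi(t))+e(t)\delta(x-\phi(t))$. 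This is precisely the form \eqref{e3.1} with $\phi,e$ as in \eqref{e4.9}. I would note here the one point that deserves a word of care: although the weak asymptotic solution supplied by Theorem 4.5 is complex-valued (because $p(t)$ is), the only complex-valued piece is the correction term $p(t)R(x-\phi(t),\epsilon)$, and this is $o_{\mathcal{D}'}(1)$; hence the limiting pair is real-valued, as required by Definition 2.3.

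Finally I would check the regularity demanded by Definition 2.3, namely $(u(x,t),\sigma(x,t))\in C(\mathbb{R}_{+};\mathcal{D}'(\mathbb{R}))$: since $\phi$ and $e$ are affine (in particular $C^{1}$) in $t$, the maps $t\mapsto H(-x+\phi(t))$ and $t\mapsto e(t)\delta(x-\phi(t))$ are continuous into $\mathcal{D}'(\mathbb{R})$, and the convergence in Lemma 3.1 is uniform in $t$, so the limit inherits continuity in $t$. I do not expect a genuine obstacle in this argument; the only thing that is not completely mechanical is the bookkeeping that isolates the non-real contribution and confirms it disappears in the limit, together with the verification that passing to the weak limit commutes with the (uniform-in-$t$) expansions of Lemma 3.1 — both of which are routine given the preparatory results already established.

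\begin{proof}
By Theorem 4.5 the Cauchy problem \eqref{e1.2},\eqref{e3.2} admits a weak asymptotic solution of the form \eqref{e3.3},
\begin{align}
& u(x,t,\epsilon)=u_{0}+u_{1}H(-x+\phi(t),\epsilon)+p(t)R(x-\phi(t),\epsilon),\notag\\
& \sigma(x,t,\epsilon)=\sigma_{0}+\sigma_{1}H(-x+\phi(t),\epsilon)+e(t)\delta(x-\phi(t),\epsilon),\notag
\end{align}
with $\phi(t)=\frac{[\frac{u^{2}}{2}]-[\sigma]}{[u]}\,t$, $e(t)=\frac{\sigma_{1}^{2}}{u_{1}}\,t+e^{0}$, and $p(t)=p_{1}(t)+ip_{2}(t)$ solving $\tfrac12 p^{2}(t)\omega_{0}-e(t)=0$. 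By Definition 2.3 it suffices to identify the distributional limit of this pair as $\epsilon\to0$.

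By Lemma 3.1 one has, uniformly in $t$ on compact subintervals of $[0,\infty)$, $H(\,\cdot\,,\epsilon)=H(\,\cdot\,)+o_{\mathcal{D}'}(1)$, $\delta(\,\cdot\,,\epsilon)=\delta(\,\cdot\,)+o_{\mathcal{D}'}(1)$ and $R(\,\cdot\,,\epsilon)=o_{\mathcal{D}'}(1)$. Since $\phi$ and $e$ are $C^{1}$ and $p$ is bounded on compact time intervals, translating the argument by $\phi(t)$ and multiplying by the bounded factors $p(t),e(t)$ preserves these estimates, so
\begin{align}
u(x,t,\epsilon)&\;\longrightarrow\; u_{0}+u_{1}H(-x+\phi(t)),\notag\\
\sigma(x,t,\epsilon)&\;\longrightarrow\; \sigma_{0}+\sigma_{1}H(-x+\phi(t))+e(t)\delta(x-\phi(t)),\notag
\end{align}
in $\mathcal{D}'(\mathbb{R})$. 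The only complex-valued contribution, $p(t)R(x-\phi(t),\epsilon)$, is $o_{\mathcal{D}'}(1)$, hence the limiting pair is real-valued. Because $t\mapsto\phi(t)$ and $t\mapsto e(t)$ are continuous and the above convergences are uniform in $t$, the limit belongs to $C(\mathbb{R}_{+};\mathcal{D}'(\mathbb{R}))$. This limit is precisely the pair \eqref{e3.1} with $\phi(t)$ and $e(t)$ given by \eqref{e4.9}, which is therefore a generalised $\delta$-shock wave type solution of \eqref{e1.2},\eqref{e3.2}.
\end{proof}
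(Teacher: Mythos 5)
Your proposal is correct and takes exactly the route the paper does: Theorem 4.6 is obtained from Theorem 4.5 by passing to the distributional limit of the weak asymptotic solution, as Definition 2.3 prescribes (the paper states this in one line, mirroring how Theorem 4.2 follows from Theorem 4.1). Your added details — using Lemma 3.1 to compute the limit term by term and noting that the only complex-valued piece, $p(t)R(x-\phi(t),\epsilon)$, vanishes in the limit so the limiting pair is real-valued — are sound and simply make explicit what the paper leaves implicit.
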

\begin{remark}
 The overcompressivity assumption for the system \eqref{e1.2} yields $u_{1}>0$ and $-\frac{u_{1}}{2}<\frac{\sigma_{1}}{u_{1}}<\frac{u_{1}}{2}$.
\end{remark}
\begin{remark}
 If $u_{1}>0$ $($follows from the overcompressivity condition above$)$, then from equation \eqref{e4.7} it follows that $\dot{e}(t)>0$. If in addition
we have $e^{0}=0$, then $e(t)=\frac{\sigma_{1}^{2}}{u_{1}}t$ which is greater than zero for all t. In this case, it is sufficient to consider 
$p(t)$ as a real-valued function only.
\end{remark}

\begin{remark}
From Remark 4.8 it also follows that we might have a \textit{singular concentration} developing in the solution of the system \eqref{e1.2} even if we start with Riemann type initial data.\\
\end{remark}

\begin{remark}
 If we take $\sigma_{0}=\sigma_{1}=0$ in \eqref{e3.2}, then proceeding as in the proof of the Theorem 4.5 we obtain a generalised $\delta-$shock
 wave type solution for the system \eqref{e1.2} of the form:
\begin{equation}
 \begin{aligned}
  &u(x,t)=u_{0}+u_{1}H(-x+\phi(t)),\\
  &\sigma(x,t)=e^{0}\delta(x-\phi(t))
 \end{aligned}
 \end{equation}
where $\dot{\phi}(t)=\frac{[\frac{u^2}{2}]}{[u]}$.
\end{remark}

\begin{remark}$($Dependence of the solutions on k$)$\\
From the structure of the generalised solutions for the systems \eqref{e1.1},\eqref{e1.2} obtained from Theorem 4.2 and Theorem 4.6,
it's quite evident that as k tends to $0$, the generalised solution obtained for the system \eqref{e1.1} actually converges $($in distributional limit$)$ to that obtained 
for the system \eqref{e1.2}.\\
This observation therefore justifies our motivation to study the system \eqref{e1.2} based upon the solutions of the system \eqref{e1.1} $($letting $k\rightarrow 0$ $)$.
\end{remark}


\begin{thebibliography}{0}
 \bibitem{a1}
 Albeverio,S., Shelkovich,V.M.(2006) On the delta-shock front problem.\textit{Analytical approaches to multidimensional balance laws,Nova Sci. Publ., New York}, 45-87

 \bibitem{c1}
 Cauret,J.J., Colombeau,J.F., LeRoux,A.Y.(1989) Discontinuous generalized solutions of nonlinear nonconservative hyperbolic equation,
\textit{J. Math. Anal. Appl.} 139, 552-573. 
 
\bibitem{d1}
 Dal Maso,G., Lefloch,P.G., Murat,F.(1995) Definition and weak stability of non-conservative products,\textit{J. Math. Pures Appl.} 74, 483-548.

\bibitem{d2}
 Danilov,V.G., Mitrovic,D.(2005) Weak asymptotic of shock wave formation process,\textit{Nonlinear Anal.} 61, 613-635.

\bibitem{j1}
 Joseph,K.T.(1997) Generalized solutions to a Cauchy problem for a nonconservative hyperbolic system,\textit{J. Math. Anal. Appl.} 207, 361-389.

\bibitem{j2}
 Joseph,K.T., Sachdev,P.L.(2003) Exact solutions for some non-conservative hyperbolic systems,\textit{Internat. J. Non-Linear Mech.} 38, 1377-1386.

\bibitem{k1}
 Kalisch,H., Mitrovic,D.(2012) Singular solutions for the shallow-water equations,\textit{IMA J. Appl. Math.} 77, 340-350.

\bibitem{l1}
 Lefloch,P.G., Liu,T-P.(1993) Existence theory for nonlinear hyperbolic systems in nonconservative form,\textit{Forum Math.} 3, 261-280.

\bibitem{r1}
 Raymond,J.P.(1996) A new definition of nonconservative products and weak stability results,\textit{Boll. Un. Mat. Ital. B (7)} 10, 681-699.

\bibitem{v1}
 Volpert,A.I.(1967) The space BV and quasilinear equations,\textit{Math. USSR Sb. 2}, 225-267.

\bibitem{z1}
 Zheng,Y.(1998) Systems of conservation laws with incomplete sets of eigenvectors everywhere.\textit{ Advances in nonlinear partial differential equations and related areas (Beijing, 1997),World Sci. Publ., River Edge, NJ.} 399-426.
 


 
 


 
\end{thebibliography}
\end{document}